\begin{document}
\title{Asymptotic tail properties of Poisson mixture distributions}

\author[1,2,3,4]{Samuel Valiquette}

\author[2,3]{Gwladys Toulemonde}

\author[2]{Jean Peyhardi}

\author[4]{Éric Marchand}

\author[1,5]{Frédéric Mortier}

\authormark{Valiquette \textsc{et al.}}

\address[1]{\orgdiv{UPR Forêt et Sociétés}, \orgname{CIRAD}, \orgaddress{F-34398 Montpellier, France.
Forêts et Sociétés, Univ Montpellier, CIRAD, Montpellier, France.}}

\address[2]{\orgdiv{IMAG, CNRS}, \orgname{Université de Montpellier}, \orgaddress{34090, Montpellier, France}}

\address[3]{\orgdiv{LEMON}, \orgname{Inria}, \orgaddress{34095, Montpellier, France}}

\address[4]{\orgdiv{Département de mathématiques}, \orgname{Université de Sherbrooke}, \orgaddress{Sherbrooke, Canada, J1K 2R1}}
\address[5]{\orgdiv{Environmental Justice Program}, \orgname{Georgetown University, Washington D.C.}, \orgaddress{ United States of America}}

\corres{Samuel Valiquette, Département de mathématiques, Université de Sherbrooke, Sherbrooke, Canada, J1K 2R1.\\ \email{samuel.valiquette@usherbrooke.ca}}

\abstract[Summary]{Count data are omnipresent in many applied fields, often with overdispersion.  
With mixtures of Poisson distributions representing an elegant and appealing modelling strategy, we focus here on how the tail behaviour of the mixing distribution is related to the tail of the resulting Poisson mixture.
We define five sets of mixing distributions and we identify for each case whenever the Poisson mixture is in, close to or far from a domain of attraction of maxima.
We also characterize how the Poisson mixture behaves similarly to a standard Poisson distribution when the mixing distribution has a finite support.
Finally, we study, both analytically and numerically, how goodness-of-fit can be assessed with the inspection of tail behaviour.}

\keywords{Count data; Extreme value theory; Goodness-of-fit; Peak-over-threshold; Poisson mixtures.}

\maketitle

\section{Introduction}
Count data are classically observed in many applied fields such as in actuarial science when evaluating risk and the pricing of insurance contracts \citep[e.g.,][]{Claims}, in genetics to model the number of genes involved in phenotype variability \citep[e.g.,][]{Genes} or in ecology to model species abundance  \citep[e.g.,][]{Abundance}.
While Poisson models and regression are well established choices for these type of data, they are not suitable for overdispersed data.  
To overcome such limitations the use of Poisson mixture models has been proposed.
This assumes the Poisson's intensity is no longer an unknown fixed value, but a  positive random variable. 
A variety of mixture distributions has been already proposed \citep{Karlis} and classical examples includes the gamma distribution \citep{Greenwood}, the lognormal \citep{Bulmer} or the Bernoulli \citep{Lambert}.
As demonstrated by \cite{Feller}, Poisson mixtures are uniquely identifiable by the mixing distribution on the Poisson parameter $\lambda$.
Therefore, it suffices to take into account the behaviour of the mixing distribution when it comes to adjusting count data with a Poisson mixture model. In particular, the mixing distribution should reflect the tail behaviour of the count data.

The field of extreme value theory allows to analyze such a behaviour through the distribution of maxima.
Precisely, the tail behaviour of a random variable can be characterizes by three domains of attraction \citep{Resnick}: Weibull, Gumbel and Fréchet.
Most familiar continuous distributions can be associated to one of these domain of attraction.
For discrete distributions, \cite{Anderson} identified three different cases.
A sample drawn from a discrete distribution is either: (i) in a domain of attraction, (ii) "close" to the Gumbel domain of attraction or (iii) drastically fails to belong to one such that their maxima oscillates between two increasing integers as the sample size grows to infinity.
\cite{Perline} provided conditions on the mixing distribution such that the Poisson mixture remains in the Fréchet or Gumbel domain of attraction, i.e. case (i).
However, they did not investigate what type of distributions on $\lambda$ causes the Poisson mixture to satisfy case (ii) or (iii). 
This article aims to complete their work by identifying what conditions on the mixing distribution allow the Poisson mixture to be associated to the two latter cases. 
Moreover, we demonstrate that their condition for the Fréchet domain of attraction is not necessary in order for the Poisson mixture to remain in this domain.

This paper is organized as follows.
Section \ref{Sect:Mixture} presents the extreme value theory in the Poisson mixture context and different families of mixing distributions.
Using these set of distributions, we identify when the Poisson mixture is in, close to, or far from a domain of attraction.
Moreover, we demonstrate that Poisson mixtures satisfying the latter case behave similarly to a standard Poisson distribution.
In Section \ref{Section:Numerical}, we inspect how those three situations can affect the goodness-of-fit when it comes to adjusting count data with a Poisson mixture.
Moreover, we explore how one can identify which type of mixing distribution can be adequate by using the generalized Pareto distribution on the excesses.
We also study how the closeness to the Gumbel domain of attraction has an impact on identifying such a mixing distribution.
Finally, we provide an example where the maxima of a Poisson mixture alternates between two values.

\section{Poisson mixture tail behaviour}\label{Sect:Mixture}

In this section, we present notations and the family of mixing distributions that is studied in this paper.
Moreover, preliminary results in extreme value theory are presented and we describe maximum domain of attraction restrictions for discrete distributions.
Following this, we elaborate on mixing distributions that allow the Poisson mixture to be either in or near a domain of attraction, or to drastically fail to belong in one.
Finally, for a Poisson mixture with a finite mixing distribution, we will prove that the asymptotic behaviour of its probability mass function behaves similarly to that of a Poisson distribution. 

\subsection{Theoretical foundations}

In the following, for Poisson mixtures $X\mid \lambda$ with $\lambda$ random, $F$, $\overline{F}$ and $f$ will denote respectively the cumulative distribution function (cdf), the survival function, and the probability density function (pdf) for the mixing $\lambda$. 
Similarly, $F_M$, $\overline{F}_M$ and $P_M$ will denote respectively the cdf, the survival function, and the probability mass function (pmf) of the resulting Poisson mixture $X$.
Moreover, in this paper, we restrict the mixing distributions on $\lambda$ to those with a support equal to $(0, x_0)$ for $x_0 \in \mathbb{R}_+ \cup \{\infty\}$.
Finally, we require the notion of a slowly varying function $C(x)$ on $\mathbb{R}_+$, defined by the property: for every $t \in \mathbb{R}_+$, $C(tx)\sim C(x)$, where $g(x) \sim h(x)$ means that $\lim_{x\to \infty} \frac{g(x)}{h(x)} = 1$ for functions $g$ and $h$.\medskip

The tail behaviour of the Poisson mixture can be studied using extreme value theory.
Such a statistical approach analyzes how the maxima of $F_M$ stabilizes asymptotically.
For a general distribution $G$, the theory says that $G$ belongs to a domain of attraction if there exist two normalizing sequences $a_n > 0$ and $b_n$ such that $G^n(a_n x + b_n)$ converges to a non-degenerate distribution when $n$ tends to infinity \citep{Resnick}.
Such a non-degenerate distribution can only be the generalized extreme value distribution given by
\begin{equation}
\label{Eqn:GEV}
  \lim_{n \to \infty} G^n(a_n x + b_n) =
    \begin{cases}
      \exp \left[ - (1 + \gamma x)^{-1/\gamma}\right] & \text{for $1 + \gamma x > 0$ with $\gamma \neq 0$;}\\
      \exp \left[ - e^{-x} \right] & \text{for $x \in \mathbb{R}$ with $\gamma = 0$.}
    \end{cases}       
\end{equation}
The three possible domains of attraction are named Weibull, Gumbel and Fréchet for $\gamma < 0$, $\gamma = 0$ and $\gamma > 0$ respectively, and will be denoted by $\mathcal{D}_-$, $\mathcal{D}_0$ and $\mathcal{D}_+$. 
Accordingly, we will write $G \in \mathcal{D}$ where $\mathcal{D}$ is one of the three domains.
Necessary and sufficient conditions for $G$ to be in a domain of attraction have been established by \citet{Gnedenko}.
While most common continuous distributions can be associated to a domain of attraction, this is not always the case for discrete random variables.
Indeed, a necessary condition for a discrete distribution $G$ to be in a domain of attraction is the long-tailed property \citep{Anderson} defined by
\begin{eqnarray}\label{Eqn:LongTail}
    \overline{G}(n+1) \sim \overline{G}(n).
\end{eqnarray}
Well known discrete distributions, such as Poisson, geometric and negative binomial, do not satisfy the above property.
However, \citet{Anderson} and \citet{Shimura} showed that if a discrete distribution verifies 
\begin{eqnarray}\label{Eqn:LightTail}
\overline{G}(n+1) \sim L \overline{G}(n),
\end{eqnarray}
for $L \in (0,1)$, then $G$ is, in a sense, "close" to the Gumbel domain.
More precisely, \citet{Shimura} showed that property \eqref{Eqn:LightTail} implies that $G$ is the discretization of a unique continuous distribution belonging to $\mathcal{D}_0$.
On the other hand, \cite{Anderson} showed that there exist a sequence $b_n$ and $\alpha > 0$ such that 
\begin{align*}
    \limsup_{n \to \infty} G^n(x+b_n) &\leq \exp\left(-e^{-\alpha x}\right)\\
    \liminf_{n \to \infty} G^n(x+b_n) &\geq \exp\left(-e^{-\alpha (x-1)}\right)
\end{align*}
if and only if $\overline{G}(n+1) \sim e^{-\alpha}\overline{G}(n).$
Therefore, the supremum and infimum limits of $G^n(x+b_n)$ are bounded by two Gumbel distributions under condition \eqref{Eqn:LightTail}.
The geometric and negative binomial distributions are two such examples. 
Finally, if the discrete distribution is a Poisson, or more generally such that
\begin{eqnarray}\label{Eqn:ShortTail}
\lim_{n \to \infty} \frac{\overline{G}(n+1)}{\overline{G}(n)} = 0,
\end{eqnarray}
then no sequence $b_n$ can be found such that the the supremum and infimum limits of $G^n(x+b_n)$ are bounded by two different Gumbel distributions.
For this case, \cite{Anderson} showed that for $Y_i \overset{iid}{\sim} G$, there exists a sequence of integers $I_n$ such that
\begin{eqnarray}\label{Eqn:MaxSeq}
\lim_{n \to \infty} P\left(\max_{1\leq i \leq n} Y_i = I_n \text{ or } I_n + 1\right) =1
\end{eqnarray}
if and only if \eqref{Eqn:ShortTail} is satisfied.
Therefore, the maximum of such discrete distribution oscillates between two integers asymptotically.

\subsection{Poisson mixtures categories}

Since Poisson mixture distributions are discrete distributions, they are constrained to the long-tailed property \eqref{Eqn:LongTail} in order to have a domain of attraction. 
Otherwise, they may be close to the Gumbel domain or with a maximum alternating between two integers.
Since a Poisson mixture is uniquely identifiable by the distribution on $\lambda$ \citep{Feller}, it follows that its tail behaviour depends on the latter.
Therefore, we seek to identify what conditions on the distribution of $\lambda$ allow the Poisson mixture distributions to satisfy either equation \eqref{Eqn:LongTail}, \eqref{Eqn:LightTail} or \eqref{Eqn:ShortTail}.
In the following, we will establish that Poisson mixtures with $F$ in $\mathcal{D}_+$ or $\mathcal{D}_-$ will satisfy equations \eqref{Eqn:LongTail} and \eqref{Eqn:ShortTail} respectively, but for mixing distributions in $\mathcal{D}_0$, the Poisson mixture may satisfy either one of the three limits depending on their behaviour.
We require the following definitions and notations.
\begin{definition}
\label{Def:ExpTail}
A distribution $F$ has an \textbf{exponential tail} if for all $k \in \mathbb{R}$, there is a $\beta > 0$ such that for $x\to \infty$
\begin{equation}\label{Eqn:exptail}
    \overline{F}(x+k) \sim e^{-\beta k} \overline{F}(x).
\end{equation} 
\end{definition}

\begin{definition}
\label{Def:GumbelCond}
    A distribution $F$ satisfies the \textbf{Gumbel hazard condition} if its density $f$ has a negative derivative for all $x$ in some left neighborhood of $\{\infty\}$, $\lim_{x \to \infty} \frac{d}{dx}\left[ \frac{1-F(x)}{f(x)}  \right] = 0$ (the 3rd Von Mises Condition) and $\lim_{x \to \infty} \frac{x^{\delta}f(x)}{1-F(x)} = 0$ for some $\delta \geq \frac{1}{2}$.
\end{definition}
\noindent
Using Definitions \ref{Def:ExpTail} and \ref{Def:GumbelCond}, we focus on three distinct subsets of $\mathcal{D}_0$.
Firstly, distributions satisfying one of these definitions are in the Gumbel domain of attraction, see \cite{Shimura} and \cite{Resnick}.
Secondly, some distributions with finite tail are in $\mathcal{D}_0$, but do not belong to $\mathcal{D}_-$ (e.g. \cite{Gnedenko}).
Based on these three cases, let $\mathcal{D}_0^\mathcal{E}$, $\mathcal{D}_0^\mathcal{H}$ and $\mathcal{D}_0^\mathcal{F}$ denote respectively the classes of $F \in \mathcal{D}_0$ satisfying Definition \ref{Def:ExpTail}, Definition \ref{Def:GumbelCond}, and with finite tail.
These subsets of $\mathcal{D}_0$ are disjoint by the following Proposition.
\begin{proposition}
\label{Prop:exptail}
The sets $\mathcal{D}_0^\mathcal{E}$, $\mathcal{D}_0^\mathcal{H}$ and $\mathcal{D}_0^\mathcal{F}$ are disjoint.
\end{proposition}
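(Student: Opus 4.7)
My plan is to split into two disjointness claims: (i) $\mathcal{D}_0^\mathcal{F}$ is separated from both $\mathcal{D}_0^\mathcal{E}$ and $\mathcal{D}_0^\mathcal{H}$, and (ii) $\mathcal{D}_0^\mathcal{E} \cap \mathcal{D}_0^\mathcal{H} = \emptyset$. Claim (i) is essentially definitional: any $F \in \mathcal{D}_0^\mathcal{F}$ has a finite upper endpoint $x_0$, so $\overline{F}(x) = 0$ and $f(x) = 0$ for $x \geq x_0$. The exponential-tail requirement of Definition~\ref{Def:ExpTail} needs $\overline{F}(x+k)/\overline{F}(x) \to e^{-\beta k} > 0$, which cannot hold once both sides eventually vanish; likewise Definition~\ref{Def:GumbelCond} demands that $f$ have a negative derivative on some left neighborhood of $\infty$, which is incompatible with $f \equiv 0$ past $x_0$. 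Hence $F \notin \mathcal{D}_0^\mathcal{E} \cup \mathcal{D}_0^\mathcal{H}$.

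For claim (ii) I would argue by contradiction: assume $F \in \mathcal{D}_0^\mathcal{E} \cap \mathcal{D}_0^\mathcal{H}$ with exponential-tail rate $\beta > 0$. The Gumbel hazard condition provides a density $f$ that is eventually decreasing, which I would exploit to promote the survival-function asymptotic $\overline{F}(x+k) \sim e^{-\beta k}\overline{F}(x)$ into a pointwise statement about the hazard $f/\overline{F}$. Writing
\[
\int_x^{x+k} f(t)\,dt \;=\; \overline{F}(x) - \overline{F}(x+k) \;\sim\; (1 - e^{-\beta k})\,\overline{F}(x),
\]
and using the monotonicity squeeze $k\,f(x+k) \leq \int_x^{x+k} f(t)\,dt \leq k\,f(x)$ together with $\overline{F}(x)/\overline{F}(x+k) \to e^{\beta k}$, one obtains
\[
\frac{1 - e^{-\beta k}}{k} \;\leq\; \liminf_{x \to \infty} \frac{f(x)}{\overline{F}(x)} \;\leq\; \limsup_{x \to \infty} \frac{f(x)}{\overline{F}(x)} \;\leq\; \frac{e^{\beta k} - 1}{k}
\]
for every fixed $k > 0$. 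Letting $k \downarrow 0$ pinches both outer bounds to $\beta$, so $f(x)/\overline{F}(x) \to \beta > 0$. Multiplying by $x^{\delta}$ yields $x^{\delta} f(x)/\overline{F}(x) \to \infty$ for any $\delta \geq 1/2$, contradicting the requirement $x^{\delta} f(x)/\overline{F}(x) \to 0$ built into Definition~\ref{Def:GumbelCond}.

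The step I expect to be most delicate is the transfer from the survival-function relation \eqref{Eqn:exptail} to pointwise control on $f$: an exponential tail of $\overline{F}$ does not by itself pin down the density, so the eventual monotonicity of $f$ granted by Definition~\ref{Def:GumbelCond} must be used essentially, and one must be careful that the upper bound on $f(x+k)/\overline{F}(x+k)$ and the lower bound on $f(x)/\overline{F}(x)$ really control the same $\limsup$ and $\liminf$ of $f/\overline{F}$ at infinity. Once the hazard rate is shown to converge to $\beta > 0$, the clash with the $x^{\delta}$-decay requirement is immediate and the disjointness follows.
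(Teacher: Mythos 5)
Your proof is correct, and its skeleton matches the paper's: first, $\mathcal{D}_0^\mathcal{F}$ is separated from the other two classes because a finite right endpoint is incompatible with both the exponential-tail asymptotic and with $f$ being defined and decreasing on a left neighborhood of $\infty$; second, one shows that an exponential tail forces the hazard rate $f/\overline{F}$ to converge to $\beta>0$, which clashes with the requirement $x^{\delta}f(x)/\overline{F}(x)\to 0$ in Definition \ref{Def:GumbelCond}. Where you genuinely diverge from the paper is in how the hazard-rate convergence is obtained. The paper routes through regular-variation machinery: Cline's characterization $\overline{F}(\ln x)=C(x)x^{-\beta}$, the monotone density theorem to transfer the exponential tail to $f$, and uniform convergence of the limit in \eqref{Eqn:exptail} to deduce $f'(x)\sim-\beta f(x)$ and hence $f(x)\sim\beta\overline{F}(x)$. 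You instead use the eventual monotonicity of $f$ (already granted by Definition \ref{Def:GumbelCond}) to squeeze $\int_x^{x+k}f$ between $kf(x+k)$ and $kf(x)$, combine with $\overline{F}(x)-\overline{F}(x+k)\sim(1-e^{-\beta k})\overline{F}(x)$, and let $k\downarrow 0$ to pinch $\liminf$ and $\limsup$ of $f/\overline{F}$ at $\beta$. Your route is more elementary and self-contained — it avoids citing Cline, Bingham's Theorem 1.7.2, and the uniform-convergence theorem — and it makes explicit that the monotonicity of $f$ is the hypothesis doing the work (the paper's appeal to the monotone density theorem quietly relies on the same monotonicity). The paper's route, in exchange, yields the stronger intermediate fact that $f$ itself has an exponential tail, which is of independent interest but not needed for the disjointness claim. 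Both arguments land on the same contradiction, and your handling of the $\limsup$/$\liminf$ bookkeeping (bounding $f(x+k)/\overline{F}(x+k)$ for the upper bound and $f(x)/\overline{F}(x)$ for the lower) is the right way to make the squeeze rigorous.
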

\begin{proof}
Since $\mathcal{D}_0^\mathcal{E}$ and $\mathcal{D}_0^\mathcal{H}$ represent distributions with an infinite tail, they are both disjoint from $\mathcal{D}_0^\mathcal{F}$.
To establish that $\mathcal{D}_0^\mathcal{E}$ and $\mathcal{D}_0^\mathcal{H}$ are disjoint, it is sufficient to show that the condition on the hazard rate function in Definition \ref{Def:GumbelCond} is not satisfied for the former case.
By abuse of notation, we will denote in this proof by $C$ any slowly varying function.
As noticed in \cite{Cline}, $F$ has an exponential tail if and only if $\overline{F}(\ln x) = C(x) x^{-\beta}$ for some $\beta > 0$.
By the monotone density theorem presented in Theorem 1.7.2. in \cite{Bingham}, we then have $f(x) = C(e^x)e^{-\beta x}$.
Therefore, the density $f$ still has an exponential tail.
Moreover, its limit given by Definition \ref{Def:ExpTail} converges uniformly on $(\ln b, \infty)$ for every $b > 0$ \citep{Resnick}.
Then, by differentiating $f$ with respect to $k$,
$$\lim_{x \to \infty} \frac{f'(x+k)}{f(x)} = - \beta e^{-\beta k },$$
and fixing $k = 0$, we conclude that $f'(x) \sim -\beta f(x)$.
Using this property, one has for all $\delta  > 0$ that
$$\lim_{x \to \infty} \frac{x^\delta f(x)}{\overline{F}(x)} = \beta \lim_{x \to \infty} x^\delta = \infty,$$
showing that the Gumbel hazard condition (Definition \ref{Def:GumbelCond}) is not satisfied.
\end{proof}
\noindent
Although these subsets are disjoint, they do not form a partition of $\mathcal{D}_0$.
Indeed, the Weibull distribution with cdf $F(x) = 1 - e^{-\left(\frac{x}{\beta}\right)^\alpha}$ is neither in $\mathcal{D}_0^\mathcal{H}$ or $\mathcal{D}_0^\mathcal{E}$ when $\alpha \not\in (0,1/2) \cup \{1\}$.
This distribution belongs to a broader subset of $\mathcal{D}_0$ named Weibull tail which intersects with $\mathcal{D}_0^\mathcal{H}$ and $\mathcal{D}_0^\mathcal{E}$; see \cite{Gardes} for more details.
We now discriminate between properties \eqref{Eqn:LongTail}, \eqref{Eqn:LightTail} or \eqref{Eqn:ShortTail} with respect to the domain of attraction of $\lambda$.
\begin{theorem}
Let $F_M$ be a Poisson mixture with $\lambda$ distributed according to a cdf $F$ and supported on $(0,x_0)$ with $x_0\in\mathbb{R}_+ \cup \{\infty\}$.
Then for any integer $k \geq 1$,
\begin{equation*}
  \lim_{n \to \infty} \frac{\overline{F}_M(n+k)}{\overline{F}_M(n)} =
    \begin{cases}
      1 & \text{if $F \in \mathcal{D}_+ \cup \mathcal{D}_0^\mathcal{H}$,}\\
      (1 + \beta)^{-k} & \text{if $F \in \mathcal{D}_0^\mathcal{E}$,}\\
      0 & \text{if $F \in \mathcal{D}_- \cup \mathcal{D}_0^\mathcal{F}$,}
    \end{cases}       
\end{equation*}
where $\beta > 0$ is given by Definition \ref{Def:ExpTail} for $\mathcal{D}_0^\mathcal{E}$.
\label{Thm:Ratio}
\end{theorem}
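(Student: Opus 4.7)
The starting point is the integral representation
\[
  \overline{F}_M(n) \;=\; \int_0^{x_0} \frac{t^n e^{-t}}{n!}\,\overline{F}(t)\,dt \;=\; E\bigl[\overline{F}(Y_{n+1})\bigr],
\]
obtained either by Fubini on $\overline{F}_M(n)=\sum_{j>n}P_M(j)$, or from the Poisson--Gamma duality $P(\mathrm{Poi}(\lambda)>n)=P(Y_{n+1}\le\lambda)$ where $Y_{n+1}\sim\mathrm{Gamma}(n+1,1)$. Since $Y_{n+1}$ concentrates at $n$ with fluctuation of order $\sqrt{n}$, the ratio $\overline{F}_M(n+k)/\overline{F}_M(n)$ becomes a weighted-Laplace problem whose answer is controlled by how $\overline{F}$ behaves on a window of width $\sqrt{n}$ around $n$. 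I would split into three regimes according to the class of $F$.

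The easy regime is $F\in\mathcal{D}_-\cup\mathcal{D}_0^{\mathcal{F}}$: since the support $(0,x_0)$ is bounded, the pointwise inequality $t^{n+k}\le x_0^k\,t^n$ on this support plugged into the integral representation gives directly
\[
  \frac{\overline{F}_M(n+k)}{\overline{F}_M(n)} \;\le\; \frac{x_0^k}{(n+1)\cdots(n+k)} \;\longrightarrow\; 0.
\]
For $F\in\mathcal{D}_0^{\mathcal{E}}$, I would use the representation $\overline{F}(t)=C(e^{t})e^{-\beta t}$ with $C$ slowly varying, already isolated in the proof of Proposition~\ref{Prop:exptail}. Substituting and performing the change of variable $u=(1+\beta)t$ yields $\overline{F}_M(n)=(1+\beta)^{-(n+1)}\,E\bigl[C(e^{Y_{n+1}/(1+\beta)})\bigr]$, so that the desired ratio factorizes as $(1+\beta)^{-k}$ times a ratio of expectations of slowly varying functions, which converges to $1$ by the Karamata representation combined with the concentration $Y_{n+1}/n\to 1$ and Potter bounds controlling tail contributions.

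For the two remaining classes $\mathcal{D}_+$ and $\mathcal{D}_0^{\mathcal{H}}$, the unified strategy is to establish the Laplace equivalence $\overline{F}_M(n)\sim\overline{F}(n)$ and then conclude via the long-tailed property $\overline{F}(n+k)\sim\overline{F}(n)$ (which holds by regular variation in $\mathcal{D}_+$, and in $\mathcal{D}_0$ with infinite right endpoint because the auxiliary scale $a(x)=\overline{F}(x)/f(x)$ tends to infinity). For $F\in\mathcal{D}_+$ this is routine: $\overline{F}(Y_{n+1})/\overline{F}(n)\to 1$ pointwise by regular variation, and Potter's inequality supplies a dominating function to pass to the limit in $E[\overline{F}(Y_{n+1})]/\overline{F}(n)\to 1$. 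For $F\in\mathcal{D}_0^{\mathcal{H}}$, the Gumbel hazard condition $x^{\delta}f(x)/\overline{F}(x)\to 0$ with $\delta\ge 1/2$ rewrites as $a(x)/x^{\delta}\to\infty$, so in particular $a(n)\gg\sqrt{n}$; hence the Gamma window of width $\sqrt{n}$ is negligible on the characteristic scale $a(n)$ of $\overline{F}$, and $\overline{F}(n+v\sqrt{n})/\overline{F}(n)\to 1$ uniformly on compact $v$-sets, via the convergence $\overline{F}(x+ta(x))/\overline{F}(x)\to e^{-t}$ granted by the Von Mises condition.

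The main obstacle is the $\mathcal{D}_0^{\mathcal{H}}$ case: upgrading the pointwise convergence $\overline{F}(Y_{n+1})/\overline{F}(n)\to 1$ to convergence in expectation. One has to build a dominating function valid over the whole support of $Y_{n+1}$, not just the bulk $n\pm C\sqrt{n}$; this requires exploiting the monotone decrease of $f$ and the Von Mises condition to produce exponential-type tail bounds on $\overline{F}(y)/\overline{F}(n)$ uniformly in $y$, so that Gaussian concentration of $Y_{n+1}$ around $n$ kills the contribution from the tails. Once this uniform integrability is in place, the rest of the argument is mechanical.
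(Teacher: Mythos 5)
Your proposal is correct and follows essentially the same route as the paper: the same integral representation $\overline{F}_M(n)=\int_0^{x_0}\frac{t^ne^{-t}}{n!}\overline{F}(t)\,dt$, the same bound $x_0^k/\prod_{i=1}^k(n+i)$ for the bounded-support case, the same substitution $\overline{F}(t)=C(e^t)e^{-\beta t}$ for the exponential-tail case (your centering of the slowly varying factor at $n/(1+\beta)$, where the integrand actually concentrates, is in fact the more careful version of the paper's claim), and an Abelian/Laplace argument for $\mathcal{D}_+$ that the paper packages as Theorem 4.1.4 of Bingham et al.\ while you re-derive it via Gamma concentration plus Potter bounds. The one step you flag as the main obstacle --- uniform integrability of $\overline{F}(Y_{n+1})/\overline{F}(n)$ in the $\mathcal{D}_0^\mathcal{H}$ case --- is precisely the content of Perline's result, which the paper simply cites rather than reproving, and your sketch (auxiliary scale $a(n)\gg\sqrt{n}$ versus the Gamma window, with sub-exponential control of $\overline{F}(y)/\overline{F}(n)$ against Gaussian concentration of $Y_{n+1}$) is the right way to fill it.
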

\begin{proof}
    (A) $ \lim_{n \to \infty} \frac{\overline{F}_M(n+k)}{\overline{F}_M(n)} = 1$: The result for $\mathcal{D}_0^\mathcal{H}$ is directly established by \cite{Perline}.
    For $F \in \mathcal{D}_+$, a necessary and sufficient condition is that $\overline{F}(x) = C(x)x^{-\alpha}$ with $\alpha > 0$ \citep{Gnedenko}.
    In fact, $C(x)$ must be locally bounded since $\overline{F}$ is bounded.
    As presented in \cite{Karlis}, the survival function of the mixture is given by
    $$\overline{F}_M(x) = \int_0^\infty \frac{\lambda^{\lfloor x \rfloor} e^{-\lambda}}{\lfloor x \rfloor !} (1-F(\lambda)) d\lambda = \frac{\Gamma(\lfloor x \rfloor - \alpha + 1)}{\Gamma(\lfloor x \rfloor + 1)} \int_0^\infty \underbrace{\frac{\lambda^{\lfloor x \rfloor - \alpha} e^{-\lambda}}{\Gamma(\lfloor x \rfloor - \alpha + 1)}}_{g(x,\lambda)} C(\lambda) d\lambda$$
    for $x$ such that $\lfloor x \rfloor - \alpha > 0$.
    By the definition of the Gamma function, $\int_0^\infty g(x, \lambda) d\lambda = 1$, and then for $0 \leq a < b \leq \infty$, $\phi \in (-1,1)$, and Stirling's formula, we have
    $$\int_a^b \lambda^\phi g(x, \lambda) d\lambda \leq \int_0^\infty \lambda^\phi g(x, \lambda) d\lambda = \frac{\Gamma(\lfloor x \rfloor - \alpha + \phi + 1)}{\Gamma(\lfloor x \rfloor - \alpha + 1)} \sim \lfloor x \rfloor^\phi.$$
    By Theorem 4.1.4 in \cite{Bingham}, we can conclude that $\overline{F}_M$ is such that
    $$\overline{F}_M(x) \sim C(\lfloor x \rfloor)  \frac{\Gamma(\lfloor x \rfloor - \alpha + 1)}{\Gamma(\lfloor x \rfloor + 1)} \sim C(\lfloor x \rfloor) \lfloor x \rfloor^{-\alpha}.$$
    Furthermore, since $\lfloor x \rfloor \sim x$, $C(\lfloor x \rfloor) \sim C(x)$ using the Karamata representation of $C$ \citep{Resnick}.
    Therefore $F_M \in \mathcal{D}_+$ and $\overline{F}_M(n+k) \sim \overline{F}_M(n)$.\medskip
    
    \noindent (B) $ \lim_{n \to \infty} \frac{\overline{F}_M(n+k)}{\overline{F}_M(n)} = (1+\beta)^{-k}$: Since $F$ has an exponential tail, then $\overline{F}(x) = C(e^x) e^{-\beta x}$ for some $\beta > 0$.
    Using a similar argument as in Theorem 4.1.4 in \cite{Bingham}, we can prove that 
    $$\overline{F}_M(n) \sim \frac{C(e^n)}{(1+\beta)^{n+1}}.$$
    Therefore, 
    $$\lim_{n \to \infty} \frac{1-F_M(n+k)}{1-F_M(n)} =  (1+\beta)^{-k} \lim_{n \to \infty} \frac{C(e^{n+k})}{C(e^n)} = (1+\beta)^{-k}.$$\medskip
    
    \noindent (C) $ \lim_{n \to \infty} \frac{\overline{F}_M(n+k)}{\overline{F}_M(n)} = 0$: Because $\overline{F}_M(n) = \int_0^{x_0} \frac{\lambda^n e^{-\lambda}}{n!} (1-F(\lambda)) d\lambda$, the result as above follows since
    \begin{align*}
        \frac{\overline{F}_M(n+k)}{\overline{F}_M(n)} &= \frac{1}{\prod_{i=1}^k (n+i)} \frac{\int_0^{x_0} \lambda^{n+k} e^{-\lambda} (1-F(\lambda)) d\lambda}{\int_0^{x_0} \lambda^{n} e^{-\lambda} (1-F(\lambda)) d\lambda}\\
        &\leq \frac{x_0^k}{\prod_{i=1}^k (n+i)} \to 0 \text{ when $n \to \infty$}.
    \end{align*}
\end{proof}
Theorem \ref{Thm:Ratio} establishes that if $F \in \mathcal{D}_+$, then $F_M \in \mathcal{D}_+$ which improves the result of \cite{Perline} that adds the 1st Von Mises condition \citep{Resnick} to proved a similar result.
By relaxing such a condition, we proved that any mixing distributions in $\mathcal{D}_+$ allows the Poisson mixture to remain in this domain of attraction.
Analogous to this property, \cite{Shimura} showed that any discretization of a continuous distribution in $\mathcal{D}_+$ preserves the domain of attraction.
Considering the Poisson mixture as a discretization operator, we obtain another example where the Fréchet domain of attraction is preserved.
A broad set of mixing distributions in $\mathcal{D}_+$ can be found, for example the Fréchet, folded-Cauchy, Beta type II, inverse-Gamma, or the Gamma/Beta type II mixture \citep{Irwin}.
Unfortunately, examples are scarce for distributions in $\mathcal{D}_0^\mathcal{H}$.
Indeed the asymptotic behaviour of the hazard rate function in Definition \ref{Def:GumbelCond} is quite restrictive.
Examples include the lognormal, the Benktander type I and II \citep{Benktander}, and the Weibull distributions, with further restrictions on the parameters for the latter two cases.
These type of distributions do not encompass cases like the Gamma, even though the associated mixing distribution belongs to $\mathcal{D}_0$, because it does not satisfy the additional condition on the hazard rate function.
The class $\mathcal{D}_0^\mathcal{E}$ allows to describe such mixing distribution.
It includes a broad class of elements among others Gamma, Gamma/Gompertz, exponential, exponential logarithmic, inverse-Gaussian and the generalized inverse-Gaussian.
As previously mentioned these distributions are in the Gumbel domain of attraction, but from Theorem \ref{Thm:Ratio}, the resulting Poisson mixtures do not belong to any domain of attraction.
However, we can quantify how close such Poisson mixtures are to the Gumbel domain of attraction.
Indeed, if $\beta \to 0$ then $\frac{1-F_M(n+1)}{1-F_M(n)} \to 1$, i.e. it approaches a long-tailed distribution.
Finally, when $F$ has a finite tail, i.e. $F \in \mathcal{D}_- \cup \mathcal{D}_0^\mathcal{F}$, the Poisson mixture cannot be close to any domain of attraction by Theorem \ref{Thm:Ratio}.

\subsection{Asymptotic behaviour for $F \in \mathcal{D}_-$}
To shed light on why the last limit in Theorem \ref{Thm:Ratio} is null, we complete this section by studying the asymptotic behaviour of the pmf $P_M$ when $F$ is in $\mathcal{D}_-$.
\cite{Willmot} studied such a behaviour when the Poisson mixture has a mixing distribution with a particular exponential tail.
This result is presented in the following Proposition.
\begin{proposition}[\cite{Willmot}]
\label{Prop:Willmot}
Let $F_M$ be a Poisson mixture with $\lambda$ distributed according to a distribution $F$ such that its density is $$f(x) \sim C(x) x^{\alpha} e^{-\beta x},$$
where $C$ is a locally bounded and slowly varying function on $\mathbb{R}_+$, and for some $\alpha \in \mathbb{R}$ and $\beta > 0$.
Then the pmf $P_M$ is such that
$$P_M(n) \sim C(n) n^\alpha (1+\beta)^{-(n+\alpha +1)}.$$
\end{proposition}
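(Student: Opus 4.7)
The plan is to start from the standard integral representation
\[
P_M(n) \;=\; \int_0^\infty \frac{\lambda^n e^{-\lambda}}{n!}\, f(\lambda)\, d\lambda,
\]
substitute the asymptotic form $f(\lambda)\sim C(\lambda)\lambda^\alpha e^{-\beta\lambda}$, and then absorb the extra $e^{-\beta\lambda}$ into the exponential by the change of variables $u=(1+\beta)\lambda$. After that change, one obtains
\[
\int_0^\infty \frac{\lambda^{n+\alpha}e^{-(1+\beta)\lambda}}{n!}\,C(\lambda)\,d\lambda \;=\; \frac{1}{n!\,(1+\beta)^{n+\alpha+1}}\int_0^\infty u^{n+\alpha}e^{-u}\,C\!\bigl(u/(1+\beta)\bigr)\,du,
\]
so the task reduces to evaluating the integral of $u^{n+\alpha}e^{-u}$ against a slowly varying function.

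The next step is to recognize, exactly as in part~(A) of the proof of Theorem~\ref{Thm:Ratio}, that $u^{n+\alpha}e^{-u}/\Gamma(n+\alpha+1)$ is a gamma density which concentrates around $u=n+\alpha$ as $n\to\infty$. Theorem~4.1.4 of \cite{Bingham} then yields
\[
\int_0^\infty \frac{u^{n+\alpha}e^{-u}}{\Gamma(n+\alpha+1)}\,C\!\bigl(u/(1+\beta)\bigr)\,du \;\sim\; C\!\bigl(n/(1+\beta)\bigr) \;\sim\; C(n),
\]
the last equivalence holding because $C$ is slowly varying. Multiplying back, and using Stirling's formula to write $\Gamma(n+\alpha+1)/n!\sim n^\alpha$, the claimed asymptotic $P_M(n)\sim C(n)\,n^\alpha(1+\beta)^{-(n+\alpha+1)}$ falls out.

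The main obstacle is justifying the replacement of $f(\lambda)$ by its asymptotic equivalent \emph{inside} the integral; $f(\lambda)\sim C(\lambda)\lambda^\alpha e^{-\beta\lambda}$ only holds as $\lambda\to\infty$, and $f$ can behave arbitrarily on any bounded set. I would handle this by splitting the integral at some large but fixed $A$. On $[A,\infty)$, writing $f(\lambda)=C(\lambda)\lambda^\alpha e^{-\beta\lambda}(1+\varepsilon(\lambda))$ with $\varepsilon(\lambda)\to 0$ and using the local boundedness of $C$ together with a dominated-convergence argument against the concentrating gamma density lets one apply the Bingham theorem rigorously; on $[0,A]$, the factor $\lambda^n e^{-\lambda}/n!$ is uniformly $O(A^n/n!)$ and $f$ is integrable, so this piece is negligible relative to the target $(1+\beta)^{-n}n^\alpha C(n)$ as $n\to\infty$. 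This Tauberian-style bookkeeping is the only genuinely delicate part; the rest is substitution and Stirling.
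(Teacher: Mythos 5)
Your argument is correct, and it is essentially the canonical one: the paper itself states Proposition~\ref{Prop:Willmot} without proof (citing Willmot), but the technique you use --- the integral representation of $P_M(n)$, absorption of $e^{-\beta\lambda}$ via $u=(1+\beta)\lambda$, the concentration of the gamma kernel combined with Theorem~4.1.4 of \cite{Bingham} to pull out $C(n)$, and Stirling for $\Gamma(n+\alpha+1)/n!\sim n^\alpha$ --- is exactly the machinery the authors deploy in parts~(A) and~(B) of Theorem~\ref{Thm:Ratio} and in Theorem~\ref{Thm:Asymp}. Your handling of the two genuine technical points (verifying the moment condition $\int u^{\phi}\,u^{n+\alpha}e^{-u}\,du/\Gamma(n+\alpha+1)\sim n^{\phi}$ needed for the Bingham theorem, and splitting at a fixed $A$ so that the region where $f$ is only known to be integrable contributes $O(A^{n}/n!)=o\bigl((1+\beta)^{-n}n^{\alpha}C(n)\bigr)$) is sound and is precisely what a complete write-up requires.
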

Proposition \ref{Prop:Willmot} indicates that when the density $f$ behaves similarly to a Gamma distribution, then the pmf $P_M$ behaves like a negative binomial pmf multiplied by a regular varying function.
As previously mentioned, the negative binomial is an example of a distribution where equation \eqref{Eqn:LightTail} is satisfied.
This provides additional clarification on why the limit associated with an exponential tail in Theorem \ref{Thm:Ratio} converges to a value between $0$ and $1$.
In the following Theorem, a similar conclusion is presented when $F \in \mathcal{D}_-$.
\begin{theorem}
\label{Thm:Asymp}
Let $F_M$ be a Poisson mixture with $\lambda$ distributed according to a distribution $F \in \mathcal{D}_-$.
Then there exists an $\alpha > 0$ such that
$$\overline{F}_M(n) \sim \Gamma(\alpha + 1) C(n) n^{-\alpha} \left( \frac{x_0^{n+1}}{(n+1)!} e^{-x_0} \right).$$    
\end{theorem}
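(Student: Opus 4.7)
The plan is to perform a Laplace-type analysis of the representation
$$\overline{F}_M(n) = \frac{1}{n!}\int_0^{x_0} \lambda^{n}e^{-\lambda}\overline{F}(\lambda)\,d\lambda,$$
and to show that the integral is asymptotically driven by a shrinking neighbourhood of the right endpoint $x_0$. Because $F \in \mathcal{D}_-$, Gnedenko's characterization gives $x_0<\infty$ together with a slowly varying function $C_0$ such that $\overline{F}(x_0 - t) \sim t^{\alpha}C_0(1/t)$ as $t\downarrow 0$, for some $\alpha>0$. For $n > x_0$ the factor $\lambda^{n}e^{-\lambda}$ is strictly increasing on $(0,x_0)$, so its mass is pushed against $\lambda = x_0$, which is precisely the region where the asymptotic description of $\overline{F}$ lives. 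This alignment of the Poisson peak with the available tail information is what makes an explicit equivalent possible.

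To extract the constant I would use the substitution $\lambda = x_0(1 - s/n)$, which maps the neighbourhood of $x_0$ to a region of $s$ of unit order. It produces $\lambda^{n} = x_0^{n}(1-s/n)^{n}$ with $(1-s/n)^{n}\to e^{-s}$, leaves $e^{-\lambda}\to e^{-x_0}$ on compact sets in $s$, and yields $\overline{F}(\lambda) \sim (x_0 s/n)^{\alpha}C_0(n/(x_0 s))$. Collecting the factors and formally taking the limit under the integral gives
$$\overline{F}_M(n) \sim \frac{x_0^{n+1}e^{-x_0}}{n\cdot n!}\,\Bigl(\frac{x_0}{n}\Bigr)^{\!\alpha} C_0(n)\int_0^{\infty} s^{\alpha}e^{-s}\,ds = \frac{\Gamma(\alpha+1)\,x_0^{\alpha}\,C_0(n)\,x_0^{n+1}\,e^{-x_0}}{n^{\alpha+1}\, n!}.$$
Since $(n+1)! = (n+1)\,n! \sim n\,n!$ and the constant $x_0^{\alpha}$ can be absorbed into a redefinition $C(n) := x_0^{\alpha}C_0(n)$ --- a legitimate abuse already used in the proof of Proposition~\ref{Prop:exptail} --- this matches the equivalent claimed in the theorem.

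Making this rigorous requires partitioning the $s$-interval. On a compact window $[\epsilon,A]$, the uniform convergence of slowly varying functions on compact subsets of $(0,\infty)$, together with $(1-s/n)^{n}\to e^{-s}$, lets one pass to the limit after dividing by $(x_0/n)^{\alpha}C_0(n)$, and then letting $\epsilon\to 0$ and $A\to\infty$ recovers $\Gamma(\alpha+1)$. The region $\lambda\in(0,x_0/2)$ contributes negligibly because $\lambda^{n}\le(x_0/2)^{n}$ is exponentially smaller than the main term. The real work is in the intermediate band $s\in[A,\,n/(2x_0)]$: using $(1-s/n)^{n}\le e^{-s}$ on $[0,n]$ and Potter's inequality to bound $C_0(n/(x_0 s))/C_0(n)$ by $c\max\{s^{\delta},s^{-\delta}\}$ for sufficiently small $\delta>0$, one obtains a dominating envelope proportional to $s^{\alpha-\delta}e^{-s}+s^{\alpha+\delta}e^{-s}$ which is integrable on $(0,\infty)$, and dominated convergence closes the argument. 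The main obstacle is precisely this domination: reconciling the $s$-dependent factor $C_0(n/(x_0 s))$ with a single normalization $C_0(n)$ uniformly over an $s$-window that expands with $n$ is the technical heart of the proof, and Potter's bounds are the tool that makes it work.
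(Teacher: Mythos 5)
Your proof is correct, and it takes a genuinely different route from the paper's. The paper applies the substitution $\lambda \mapsto \lambda/(x_0-\lambda)$, which maps $(0,x_0)$ onto $(0,\infty)$ and turns the integral into a perturbed Beta integral; Gnedenko's condition is invoked in the equivalent form $1-F\left(\tfrac{x_0 x}{x+1}\right)=C(x)x^{-\alpha}$, the constant then drops out of the identity $\mathrm{B}(n-\alpha+1,\alpha+1)=\Gamma(n-\alpha+1)\Gamma(\alpha+1)/\Gamma(n+2)\sim \Gamma(\alpha+1)\,n^{-\alpha-1}$, and the uniformity issues are delegated to ``a similar argument as in Theorem \ref{Thm:Ratio}'' (i.e.\ the Abelian theorem, Theorem 4.1.4 of Bingham et al.). You instead run a direct Laplace-method argument with $\lambda=x_0(1-s/n)$, extract the limit kernel $s^{\alpha}e^{-s}$, and get $\Gamma(\alpha+1)$ by dominated convergence with Potter's bounds supplying the envelope. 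Your route is more self-contained and makes explicit the uniform control over the expanding $s$-window that the paper leaves implicit; the paper's route is shorter because the Beta identity packages the whole computation, and your final constant is consistent with it since $x_0^{\alpha}C_0(n)=C(x_0 n)\sim C(n)$ under the paper's parametrization. Two small points to tighten: the intermediate band should end at $s=n/2$ (corresponding to $\lambda=x_0/2$), not $n/(2x_0)$; and Potter's inequality for $C_0(n/(x_0 s))/C_0(n)$ requires both arguments to exceed the Potter threshold, which fails when $s$ is of order $n$ — so cut the band at $s=\epsilon n$ with $\epsilon$ small enough that $n/(x_0 s)\ge 1/(x_0\epsilon)$ stays above that threshold, and fold $s\in[\epsilon n,\,n]$ into the exponentially negligible region via $\lambda\le x_0(1-\epsilon)$.
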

\begin{proof}
    Using the integral representation of $\overline{F}_M$, 
    $$\overline{F}_M(n) = \int_0^{x_0} \frac{\lambda^n e^{-\lambda}}{n!}(1-F(\lambda))d\lambda = \frac{x_0^{n+1}}{n!} \int_0^{\infty} \frac{\lambda^n}{(\lambda+1)^{n+2}} e^{-\frac{x_0 \lambda}{\lambda + 1}} \left(1-F\left( \frac{x_0 \lambda}{\lambda + 1}\right) \right)d\lambda$$
    where the transformation $\lambda \mapsto \frac{\lambda}{x_0 - \lambda}$ has been applied.
    By adapting the necessary and sufficient condition for the Weibull domain of attraction \citep{Gnedenko}, which is $F \in \mathcal{D}_-$ if and only if $x_0 < \infty$ and $1-F\left( \frac{x_0 x}{x + 1}\right) = C(x) x^{-\alpha}$ for $C$ a locally bounded function and slowly varying and $\alpha > 0 $, we obtain 
    $$\overline{F}_M(n) = \frac{x_0^{n+1}}{n!} \int_0^{\infty} \frac{\lambda^{n-\alpha}}{(\lambda+1)^{n+2}} C(\lambda) e^{-\frac{x_0 \lambda}{\lambda + 1}} d\lambda$$ and using the fact that the Beta function is such that
    $$\mathrm{B}(a,b) = \int_0^\infty \frac{t^{a-1}}{(t+1)^{a+b}}dt,$$
    a similar argument as in Theorem \ref{Thm:Ratio} provides that 
    \begin{align*}
        \overline{F}_M(n) &\sim \frac{x_0^{n+1}}{n!}\mathrm{B}(n-\alpha + 1, \alpha + 1) C(n) e^{-\frac{x_0 n}{n + 1}}\\
         &\sim \frac{x_0^{n+1}e^{-x_0}}{n!} C(n) \frac{\Gamma(n-\alpha+1)\Gamma(\alpha + 1)}{\Gamma(n+2)}\\
        &\sim \Gamma(\alpha + 1) C(n) n^{-\alpha} \left(\frac{x_0^{n+1}e^{-x_0}}{(n+1)!} \right).
    \end{align*}
\end{proof}
\noindent
Using the asymptotic behaviour in Theorem \ref{Thm:Asymp}, a similar result can be established for $P_M$.
\begin{corollary}
\label{cor:Asymp}
Let $F_M$ be a Poisson mixture with $\lambda$ distributed according to a cdf $F \in \mathcal{D}_-$.
Then the pmf $P_M$ is such that
$$P_M(n) \sim \Gamma(\alpha + 1) C(n) n^{-\alpha} \left( \frac{x_0^{n}}{n!} e^{-x_0} \right).$$    
\end{corollary}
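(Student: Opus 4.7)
The plan is to derive Corollary~\ref{cor:Asymp} directly from Theorem~\ref{Thm:Asymp} by way of the elementary telescoping identity
$$P_M(n) = \overline{F}_M(n-1) - \overline{F}_M(n),$$
which holds for any discrete distribution. The underlying intuition is that the leading-order term already matches: applying Theorem~\ref{Thm:Asymp} at $n-1$ should reproduce the asserted equivalence, while $\overline{F}_M(n)$ will be negligible in comparison because of the extra Poisson factor $x_0/(n+1)$ gained on passing from index $n-1$ to index $n$.

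First I would apply Theorem~\ref{Thm:Asymp} at $n-1$ to write
$$\overline{F}_M(n-1) \sim \Gamma(\alpha+1)\, C(n-1)\, (n-1)^{-\alpha}\, \frac{x_0^{n}}{n!}\, e^{-x_0}.$$
Using that $C$ is slowly varying one has $C(n-1) \sim C(n)$ (a standard Karamata-representation fact, already invoked in the proof of Theorem~\ref{Thm:Ratio} for the comparison $C(\lfloor x \rfloor) \sim C(x)$), and clearly $(n-1)^{-\alpha} \sim n^{-\alpha}$. Hence
$$\overline{F}_M(n-1) \sim \Gamma(\alpha+1)\, C(n)\, n^{-\alpha}\, \frac{x_0^{n}}{n!}\, e^{-x_0},$$
which is precisely the right-hand side of the Corollary.

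It remains to show that the correction $\overline{F}_M(n)$ is negligible relative to $\overline{F}_M(n-1)$. Dividing the equivalences from Theorem~\ref{Thm:Asymp} at $n$ and $n-1$, the slowly varying and power factors cancel asymptotically and one is left with
$$\frac{\overline{F}_M(n)}{\overline{F}_M(n-1)} \sim \frac{x_0^{n+1}/(n+1)!}{x_0^{n}/n!} = \frac{x_0}{n+1} \longrightarrow 0.$$
Consequently $P_M(n) = \overline{F}_M(n-1) - \overline{F}_M(n) \sim \overline{F}_M(n-1)$, and the claimed equivalence follows.

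I do not anticipate any real obstacle: the argument is essentially a one-step consequence of Theorem~\ref{Thm:Asymp} combined with standard properties of slowly varying functions. The only mildly delicate point is justifying $C(n-1)\sim C(n)$, which is handled by the uniform convergence built into the Karamata representation and has already been used in a cognate form in the preceding proofs.
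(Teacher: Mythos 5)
Your proposal is correct and follows essentially the same route as the paper: both start from $P_M(n) = \overline{F}_M(n-1) - \overline{F}_M(n)$, apply Theorem \ref{Thm:Asymp} at $n-1$ and at $n$, and use $C(n-1)\sim C(n)$ together with the vanishing ratio $x_0/(n+1)$ to conclude. Your explicit remark that the subtracted term is negligible (rather than just taking the difference of the two limits) is, if anything, slightly more careful than the paper's one-line computation.
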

\begin{proof}
    Since $P_M(n) = \overline{F}_M(n-1) - \overline{F}_M(n)$, then
    \begin{align*}
        \lim_{n \to \infty} \frac{P_M(n)}{\Gamma(\alpha + 1) C(n) n^{-\alpha} \left( \frac{x_0^{n}}{n!} e^{-x_0} \right)} &=  \lim_{n \to \infty} \frac{C(n-1) (n-1)^{-\alpha}}{C(n)n^{-\alpha}} -  \lim_{n \to \infty} \frac{x_0}{n+1} = 1.
    \end{align*}
\end{proof}
\noindent
This result provides a new perspective on why the limit in Theorem \ref{Thm:Ratio} converges to $0$ for a mixing distribution with a finite support.
Indeed, as previously mentioned, the Poisson distribution is an example such that the limit \eqref{Eqn:ShortTail} is satisfied.
From Theorem \ref{Thm:Asymp} and Corollary \ref{cor:Asymp}, $\overline{F}_M$ and $P_M$ behave like a Poisson distribution with mean $x_0$ multiplied by a regular varying function.
Intuitively, the mixing distribution does not put weight everywhere on $\mathbb{R}_+$, so the tail of $F_M$ cannot satisfy equation \eqref{Eqn:LongTail}.

\section{Numerical Study}
\label{Section:Numerical}
This section illustrates the practical implications of the theoretical results previously obtained. 
In particular, we highlight how the mixing distribution impacts the adjustment, how the statistical evaluation of tail distributions of count data may help to select a mixing distribution, and how the maxima of Poisson mixtures with finite mixing distribution behave asymptotically.

\subsection{Impact of mixing distribution choice on goodness of fit}

To illustrate how the tail behaviour of $\lambda$ affects the model adjustment, we simulated 100 samples of different Poisson mixtures with size $n=250$ using the (i) $\mathrm{Fr\Acute{e}chet}(\alpha, \beta)$, (ii) $\mathrm{lognormal}(\mu, \sigma)$, (iii) $\mathrm{Gamma}(\alpha, \beta)$, and (iv) $\mathrm{Uniform}(0, x_0)$ distributions on $\lambda$ with densities
\begin{enumerate}[(i)]
    \item $f(x) = \frac{\alpha}{x} \left( \frac{x}{\beta} \right)^{-\alpha} e^{-\left(\frac{x}{\beta}\right)^{-\alpha}}$, $\alpha > 0$, $\beta > 0$;
    \item $f(x) = \frac{1}{x \sigma \sqrt{2 \pi}} e^{-\frac{(\ln x - \mu)^2}{2 \sigma^2}}$ $\mu \in \mathbb{R}$, $\sigma > 0$;
    \item $f(x) = \frac{\beta^\alpha}{\Gamma(\alpha)} x^{\alpha-1} e^{-\beta x}$, $\alpha > 0$, $\beta > 0$;
    \item $f(x) = \frac{\mathds{1}_{(0,x_0)}(x)}{x_0}$,
\end{enumerate}
each one being a representative of four out of five type of mixing distributions we encountered.
Respectively, they are representative of elements in $\mathcal{D}_+$, $\mathcal{D}_0^\mathcal{H}$, $\mathcal{D}_0^\mathcal{E}$, and in $\mathcal{D}_-$.
Moreover, the parameter $\gamma$ from equation \eqref{Eqn:GEV} associated to (i) and (iv) are respectively $\gamma = 1/\alpha$, $\gamma = -1$ and $\gamma = 0$ for (ii) and (iii).
For each sample, the Poisson mixture is fitted with the same four distributions and the best model is kept using a Bayesian framework.
This is done using the language \texttt{R} \citep{R} and the \texttt{rstan} \citep{Stan} package to estimate the hyperparameters by MCMC.
The best model is then kept using the highest \textit{posterior} model probability.
Those probabilities are approximated using the bridge sampling computational technique \citep{Meng} and the dedicated \texttt{R} package \texttt{Bridgesampling} \citep{Bridge}.
All results are based on the following priors: a $\mathrm{Gamma}(1,1)$ distribution for positive parameters and a $\mathrm{Normal}(0,1)$ for real parameters. 
Moreover, we simulated for each sample four MCMCs with 10,000 iterations each in order to ensure reasonable convergence for parameter estimation and for the \textit{posterior} model probabilities.
Results are presented in Table \ref{tab:}. 

\begin{table}[!ht]
\centering
\begin{tabular}{ |c|c|c|c|c|c| }
 \hline
Mixing class & Mixing distribution &  Fréchet & Lognormal & Gamma & Uniform \\
\hline
& Fréchet(1,1) & \bf{89} & 11 & 0 & 0 \\
\cline{2-6} $\mathcal{D}_+$ & Fréchet(2,1) & \bf{80} & 18 & 2 & 0\\
\hline
& Lognormal(1,1) & 5 & \bf{89} & 6 & 0\\
\cline{2-6} $\mathcal{D}_0^\mathcal{H}$ & Lognormal(0,1) & 9 & \bf{69} & 23 & 0\\
\hline
& Gamma(2,1) & 1 & 22 & \bf{73} & 4\\
\cline{2-6} $\mathcal{D}_0^\mathcal{E}$ & Gamma(2,2) & 1 & 23 & \bf{54} & 22\\ 
\hline
& Uniform(0,10) & 0 & 0 & 26 & \bf{74}\\
\cline{2-6} $\mathcal{D}_-$ & Uniform(0,5) & 0 & 1 & 38 & \bf{61}\\
\hline
\end{tabular}
\caption{Selected model frequencies for each Poisson mixture simulation with the highest frequency in bold.}
\label{tab:}
\end{table}

The Poisson-Fréchet mixtures stood out the most since their tail is heavier than any other of the distributions. 
The only competing model seems to be the Poisson-lognormal which has a heavier tail than an exponential type distribution, but lighter than the Fréchet. 
The variance also influences what model is selected. 
Indeed, for example, the lognormal(0,1) has a lesser variance compared to the lognormal(1,1). 
In the former mixture, the Gamma seems to be able to compete against the lognormal, which is not the case for the latter.
Interestingly, the Fréchet mixing distribution is selected sparingly for lognormal data even when the variance gets larger.
This fact remains true for the rest of the table since the Fréchet distribution has a much heavier tail.
By Theorem \ref{Thm:Ratio}, we know that the Gamma distribution can get close to the Gumbel domain of attraction.
From Table \ref{tab:}, we see that the lognormal is a significant competitor for both simulations, which reflects the closeness to $\mathcal{D}_0$.
However, when the rate parameter is equal to $2$, the mean and variance decrease and the uniform becomes another chosen option.
This can be explained by the fact that $\frac{\overline{F}_M(n+1)}{\overline{F}_M(n)}$ is closer to $0$ when $n$ grows to infinity.
Finally, since the uniform has a finite tail, only the Gamma can compete and, again, larger the variance the less the Gamma is selected.
Based on each case, we see a diagonal effect from the heavier tail to the finite tail. 

\subsection{Identifying the domain of attraction}\label{Section_Strategy}
\label{SubSec:Excess}
In order to identify what domain of attraction a random variable belongs to, one can uses the peaks-over-threshold (POT) method \citep{Coles}.
This technique involves the distribution of the excesses defined by $Y - u|Y > u$, for a suitable choice of $u$. 
\cite{Pickands}, \cite{Balkema} showed that $Y$ belongs to a domain of attraction if and only if the distribution of the excesses converges weakly to a generalized Pareto distribution (GPD) as $u$ tends to the right endpoint of the distribution of $Y$. In such cases, the corresponding cdf is given by
\begin{equation}
 {H}_{\gamma, \sigma}(y) =
    \begin{cases}
     1- \left(1 + \gamma \frac{y}{\sigma} \right)^{-1/\gamma} & \text{if $\gamma \neq 0$,}\\
     1- \exp\left( -\frac{y}{\sigma}  \right) & \text{$\gamma = 0$,}
    \end{cases}       
\end{equation}
with support $\mathbb{R}_+$ if $\gamma \geq 0$ or $\left[0; -\frac{\sigma}{\gamma} \right]$ if $\gamma < 0$, where $\gamma \in \mathbb{R}$ and $\sigma > 0$ are respectively shape and scale parameters. 
Moreover, the $\gamma$ parameter is the same as in equation \eqref{Eqn:GEV}. 
Therefore, fitting a GPD to the excesses of a sample can inform us on the domain of attraction the underlying distribution belongs to.
Better yet, excesses of count data can inform us whether or not a Poisson mixture distribution belongs to a known domain of attraction and, if so, which one.
Therefore, analyzing the discrete excesses can indicate what type of mixing distribution generates the Poisson mixture.
Indeed, by Theorem \ref{Thm:Ratio}, if the discrete excesses belong to a domain of attraction, then a mixing distribution $F$ should be in $\mathcal{D}_+ \cup \mathcal{D}_0^\mathcal{H}$.
Otherwise, $F$ should either have an exponential or finite tail.

From a practical point of view, the study of discrete excesses may justify a choice of model.
For example, one may hesitate between adjusting a Poisson-lognormal or a negative binomial for their count data.
In order to study how useful the discrete excesses can be, various Poisson mixtures have been simulated.
Here, we fixed the sample size to $n=1000$, the threshold $u$ to be the 95th or 97.5th empirical quantiles, and simulated $1000$ samples for each mixing distribution.
For each sample, the discrete excesses are extracted, and the \texttt{evd} R package \citep{EVD} is used to estimate the GPD parameters by maximum likelihood.
Based on these estimations, the modified Anderson Darling test for the goodness-of-fit is applied.
Finally, for the samples such that the GPD appears to be adequate, we test $H_0:\gamma = 0$ versus $H_1:\gamma\neq 0$.
To do so, we fit these two models, evaluate the corresponding log likelihoods $\mathcal{L}_1$ and $\mathcal{L}_0$, and conclude with the deviance statistic $D = 2\left(\mathcal{L}_1 - \mathcal{L}_0 \right)$ which follows approximately a $\chi_1^2$ distribution under suitable conditions \citep{Coles}.
Results are presented in Table \ref{Tab:Sim}.
\begin{table}[h]
\centering
\begin{tabular}{|c|c|c|c|c|c|}
\hline
 Mixing Class & Mixing distribution & u & Average number of access & GPD Rejection & Test $\gamma = 0$ not rejected\\
 \hline
  & & 95 & 48.727 & 0.069 & 0.014 \\
  & Fréchet(1,1) & 97.5 & 24.685 & 0.051  & 0.158 \\
\cline{2-6} $\mathcal{D}_+$  & & 95 & 41.915 & 0.777 & 0.170 \\
  & Fréchet(2,1) & 97.5 & 21.746 & 0.177 & 0.615\\ 
 \hline
  & & 95 & 46.750 & 0.126 & 0.720 \\
  & Lognormal(1,1) & 97.5 & 23.644 & 0.037  & 0.845 \\
\cline{2-6} $\mathcal{D}_0^\mathcal{H}$  & & 95 & 41.914 & 0.697 & 0.257\\
  & Lognormal(0,1) & 97.5 & 21.685 & 0.142 & 0.790 \\ 
 \hline
  & & 95 & 36.200 & 0.704 & 0.045 \\
  & Gamma(2,1) & 97.5 & 18.876 & 0.245  & 0.502\\
\cline{2-6} $\mathcal{D}_0^\mathcal{E}$  & & 95 & 38.015 & 0.833 & 0.052\\
  & Gamma(2,2) & 97.5 & 16.988 & 0.392 & 0.311\\ 
 \hline
  & & 95 & 39.124 & 0.641 & 0.028 \\
  & Uniform(0,10) & 97.5 & 18.999 & 0.296  & 0.390 \\
\cline{2-6} $\mathcal{D}_-$  & & 95 & 35.161 & 0.679 & 0.059 \\
  & Uniform(0,5) & 97.5 & 18.087 & 0.369 & 0.255 \\ 
 \hline
\end{tabular}
\caption{Average number of excesses, rejection rate for the GPD and non-rejection rate of $H_0: \gamma = 0$ for the simulations with $n = 1000$ and $u = 95\mathrm{th}$ or $97.5\mathrm{th}$ empirical quantile.}
\label{Tab:Sim}
\end{table} 

Firstly, we notice that even if the Fréchet and lognormal distributions are in $\mathcal{D}_+$ and $\mathcal{D}_0^\mathcal{H}$ respectively, the Fréchet(2,1) and lognormal(0,1) cases lead to a high rejection rate for the 95th quantile threshold.
However, when both cases are simulated with a threshold $u$ equal to the 97.5th quantile, the rate of GPD rejection diminishes.
Therefore, it seems that the threshold choice has a great impact.
Moreover, when $u$ is the 97.5th quantile, the estimation of $\gamma$ is not significantly different to $0$ for 79 \% of the samples of the lognormal(0,1).
However, 61.5 \% of the samples of the Fréchet are also significantly null.
Secondly, as noted by \cite{Hitz}, the discrete excesses need a certain amount of variability in order to have a smooth adjustment to the GPD.  
Since the lognormal(1,1) has a greater variance and the Fréchet(1,1) doesn't have a finite expectation, this explains why these cases are well adjusted to the GPD.
Finally, both Gamma and uniform cases have GPD rejection rates as expected.
Interestingly, the uniform distribution is rejected at a lesser rate then the Gamma.
Again, this can be explained by the greater variance for the uniform than the Gamma simulations.\medskip

Also, the Gamma(2,1) leads to a lower rate of rejection than the Gamma(2,2), which is reasonable since the former is closer to $\mathcal{D}_0$ than the latter by Theorem \ref{Thm:Ratio}.
Indeed, if the limit in Theorem \ref{Thm:Ratio} $(1+\beta)^{-1}$ approaches $0$, the GPD rejection rate for the Poisson mixtures should increase.
Inversely, the rejection rate should decrease when $(1+\beta)^{-1}$ approaches $1$.
To further analyze this, we simulated Poisson mixtures with a Gamma($2$, $\beta$) mixing density and let the parameter $\beta$ vary from $0.1$ to $8$, the quantity $(1+\beta)^{-1}$ thus varying between $1/9$ and $10/11$.
For each value of $\beta$, we simulate 500 samples of size $n=1000$ from the Poisson mixture, fix the threshold $u$ to the 95th empirical quantile, and calculate the proportion of samples where the GPD is rejected with type I error $\alpha = 0.05$.
Results are presented in Figure \ref{Fig:reject}.
We can see that indeed the proportion decreases when $(1+\beta)^{-1}$ moves towards $1$.
Between $0$ and $0.5$, the rejection proportion oscillates between $0.5$ and $1$.
This can be explained by the fact that the number of discrete excesses also oscillates when $\beta$ increases, which affects the power of the test.\medskip

To adjust for the problems related to the discreteness of the excesses, it would be interesting to transform them into continuous variables.
As demonstrated by \cite{Shimura}, a Poisson mixture with $F \in \mathcal{D}_0^\mathcal{H}$ is a random variable that originates from an unique continuous distribution in $\mathcal{D}_0$ that has been discretized.
If one can identify such a continuous distribution associated to the discrete excesses when the GPD is rejected, then it would be reasonable to use an exponential tail mixing distribution.
A jittering technique consiting of adding random noise to data has been proposed for different discrete contexts \citep{Nagler, Coeurjolly-Rousseau}.
A plausible approach would be a jittering for the GPD test in order to adequately identify the type of mixing distribution associated to the discrete excesses.

\begin{figure}[h]
    \centering
    \includegraphics[width=\textwidth]{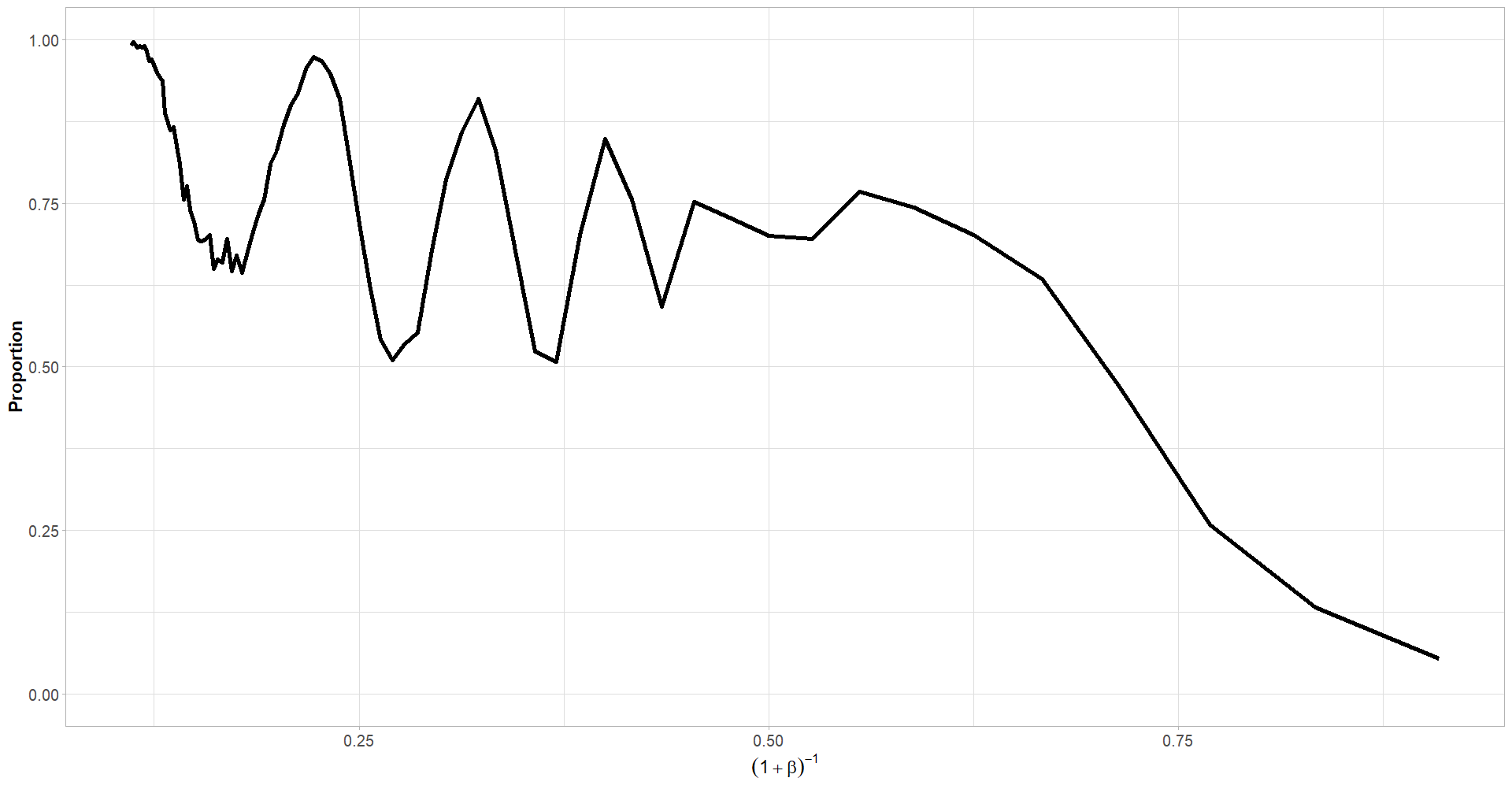}
    \caption{Proportion of Gamma($2$, $\beta$) Poisson mixture samples (size $n=1000$) where the GPD has been rejected ($\alpha = 0.05$) for the excesses ($u =$ 95th empirical quantile) as a function of $(1+\beta)^{-1}$.}
    \label{Fig:reject}
\end{figure}

\subsection{Maxima for Poisson mixtures with finite tail mixing distribution}
By Theorem \ref{Thm:Ratio}, if $F$ has bounded support $(0, x_0)$, then the Poisson mixture is short tailed, i.e. $\frac{\overline{F_M}(n+1)}{\overline{F_M}(n)}\to 0$ as $n \to \infty$.
Therefore, according to \cite{Anderson}, there exists a sequence of integers $I_n$ such that equation \eqref{Eqn:MaxSeq} is satisfied.
Moreover, by Corollary \ref{cor:Asymp}, the pmf $P_M$ asymptotically behaves like a Poisson distribution and, as mentioned, the Poisson is the primary example where its maximum oscillates between two integers.
\cite{Kimber} and \cite{Briggs} study how the sequence $I_n$ can be approximated for the Poisson distribution and showed that it grows slowly when $n \to \infty$.
Since $P_M$ behaves like the Poisson when $F$ is in $\mathcal{D}_-$, the sequence $I_n$ should also grow slowly.
To visualise this behaviour, we simulated Poisson mixtures with $\lambda \ \sim x_0\mathrm{Beta}(\alpha, \beta)$.
We fixed $\alpha = 2$, $x_0 = 5$, and for $n \in \{10, 10^2, 10^3, 10^4\}$, we simulated 10000 samples of $F_M$ with size $n$ and recorded the maximum for each sample.
With these maxima, we calculated the empirical probabilities, and repeated for $\beta \in \{1/4, 1/2, 1, 2\}$.
Figure \ref{Fig:maximum} reports on the empirical and theoretical pmf of the simulations and the maxima of $n$ Poisson variables with mean $x_0$ respectively.
\begin{figure}[h]
    \centering
    \includegraphics[width=\textwidth]{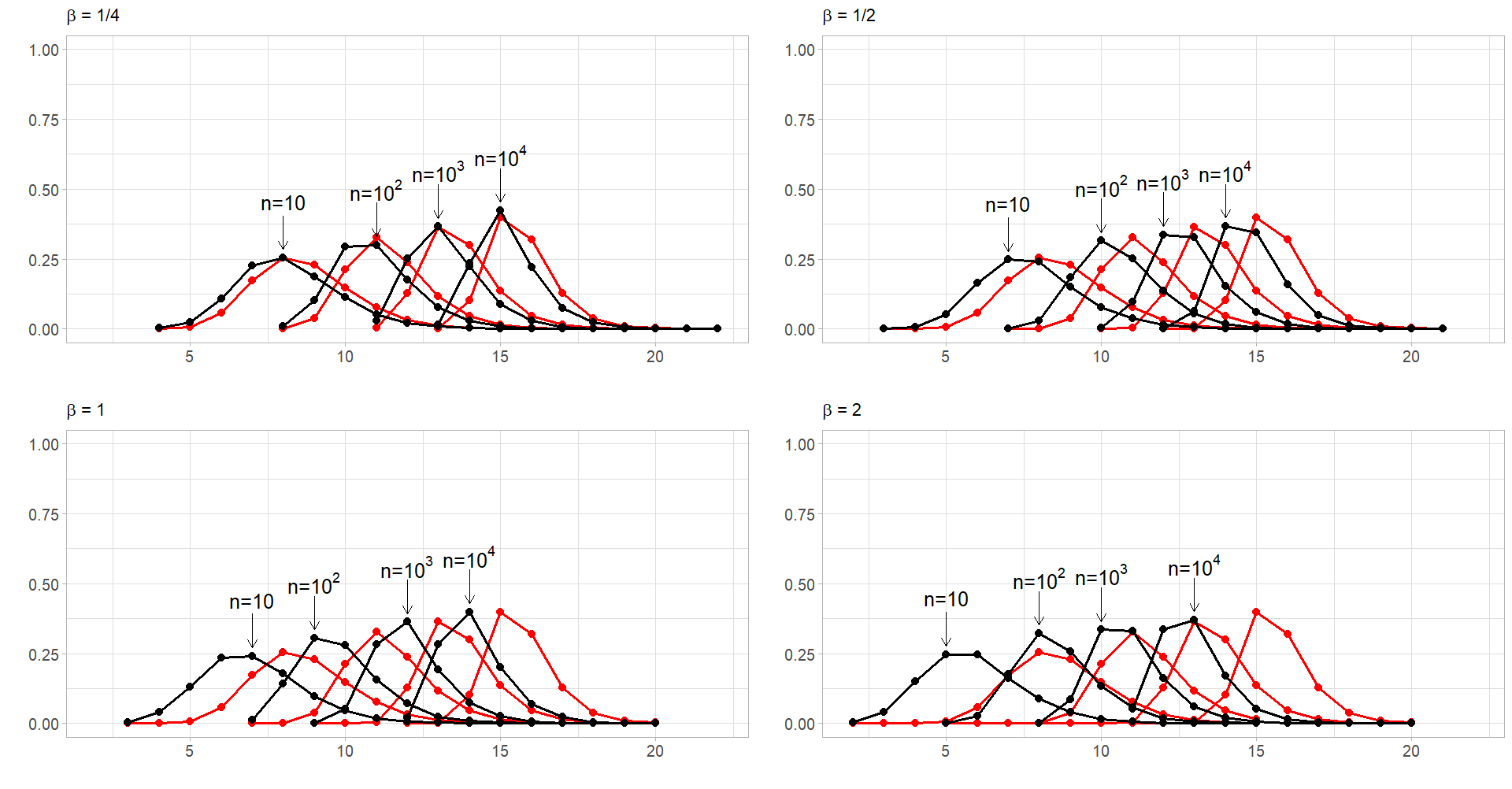}
    \caption{Maximum distributions of Poisson mixture with $\lambda \sim x_0\mathrm{Beta}(2,\beta)$ (black) and $\mathrm{Poisson}(x_0)$ (red) with $x_0 = 5$, $\beta \in \{1/4, 1/2, 1, 2\}$ and $n \in \{10, 10^2, 10^3, 10^4\}$.}
    \label{Fig:maximum}
\end{figure}
Interestingly, the greater $\beta$ becomes, the slower the sequence $I_n$ increases.
Indeed, when $\beta = 1/4$, the probability distribution of the maxima looks similar to that of a Poisson($x_0$).
For $\beta = 2$, the distribution for the Poisson mixture drastically shifts to the left.
This can be explained using Corollary \ref{cor:Asymp}.
Indeed, we can show that $P_M$ here is such that $$P_M(n) \sim \frac{\Gamma(\alpha + \beta)}{\Gamma(\alpha)} n^{-\beta} \left( \frac{x_0^n e^{-x_0}}{n!}\right),$$
and when $\beta$ approaches $0$, then only the pmf of the Poisson($x_0$) remains.
From another point of view, the density of the $x_0\mathrm{Beta}(\alpha, \beta)$ approaches a Dirac on $x_0$, so the Poisson mixture approaches a simple Poisson distribution.

\section{Conclusion and perspectives}
Overdispersed count data are commonly observed in many applied fields and Poisson mixtures are appealing to model such data. 
However, the choice of the appropriate mixing distribution is a difficult task relying mainly on empirical approaches related to modelers subjectivity or on intensive computational techniques combined with goodness-of-fit test or information criteria. 
In this paper, we showed that such a choice should respect the relation between the tail behaviour of $\lambda$ and the discrete data.
Indeed, if a distribution $F$ is in the Fréchet domain of attraction or satisfies the Gumbel hazard condition given by Definition \ref{Def:GumbelCond}, then the discrete data should be in the same domain of attraction.
Otherwise, an exponential or finites tail should be chosen.
Moreover, Theorem \ref{Thm:Ratio} established that Poisson mixtures with $F \in \mathcal{D}_0$ need to be separated into three subsets: $\mathcal{D}_0^\mathcal{E}$, $\mathcal{D}_0^\mathcal{H}$ and $\mathcal{D}_0^\mathcal{F}$.  
Both subsets $\mathcal{D}_0^\mathcal{E}$ and $\mathcal{D}_0^\mathcal{H}$ have distributions belonging to a larger subset named Weibull tail \citep{Gardes}.
It would be interesting to generalize Theorem \ref{Thm:Ratio} with this familly of mixing distributions.\smallskip

To identify whether the data distribution comes from a domain of attraction or not, we have studied the discrete excesses and their adjustment by the GPD. 
Some difficulties occurred due to the discrete nature of the data.
Solutions that could be explored are the use of techniques like the jittering or the use of discrete analogues of the GPD like the discrete generalized Pareto or the generalized Zipf distribution presented in \cite{Hitz}.
These approaches should help identify whether $\lambda$ has a exponential tail or not.
However, one could think about testing if $\lambda$ has a bounded support.
Based on Theorem \ref{Thm:Asymp} and Corollary \ref{cor:Asymp}, the Poisson mixture with a finite mixing distribution should behave similarly to a Poisson with mean $x_0$.
Testing whether $F$ has a finite tail or not based on these results is a promising avenue.\smallskip

In the field of extreme value theory, our Theorem \ref{Thm:Asymp} and the result of \cite{Willmot} in Proposition \ref{Prop:Willmot} may provide an approach to finding normalizing sequences such that the Poisson mixture belongs to a domain of attraction.
Indeed, \cite{Anderson2} showed that if the Poisson's mean $\lambda$ depends on the sample size and increases with a certain rate, then it is possible to find normalizing sequences $a_n$ and $b_n$ such that the distribution is in the Gumbel domain of attraction.
If $\lambda$ does not depend on the sample size, then no such sequence can be found.
A similar result has been proved by \cite{Nadarajah} for the negative binomial when $\alpha$ is fixed and $\beta$ approaches $0$.
Since Theorem \ref{Thm:Asymp} and Proposition \ref{Prop:Willmot} showed that Poisson mixtures with finite or exponential tail mixing distribution resemble the Poisson or the negative binomial respectively, one could exploit these asymptotic properties to generalize the results of \cite{Anderson2} and \cite{Nadarajah} with various Poisson mixtures like the Poisson-inverse-Gaussian  or Poisson-Beta.
Similarly, generalizing the results of \cite{Kimber} and \cite{Briggs} concerning the sequence $I_n$ for the maxima of Poisson random variables should also be explored.
\section*{Acknowledgments}

This research was supported by the GAMBAS project funded by the French National Research Agency (ANR-18-CE02-0025) and the French national programme LEFE/INSU.

\bibliography{Biblio}
\end{document}